\newtheorem{theorem}{Theorem}[section]
\newtheorem{proposition}[theorem]{Proposition}
\theoremstyle{definition}
\newtheorem{definition}[theorem]{Definition}
\theoremstyle{remark}
\newtheorem{remark}[theorem]{Remark}
\numberwithin{equation}{section}
\newcommand{\F}{\mathbb{F}}
\begin{document}
\title[Generalized Neighbors]{New Extremal binary self-dual codes of length 68 from a novel approach to neighbors}
\author{
Joe Gildea, Abidin Kaya, Adrian Korban and Bahattin Yildiz}

\address{Department of Mathematical and Physical Sciences, Faculty of Science and Engineering, University of Chester, England, UK}
\email{j.gildea@chester.ac.uk}

\address{Department of Mathematics Education, Sampoerna University, 12780, Jakarta, Indonesia}
\email{abidin.kaya@sampoernauniversity.ac.id}

\address{Department of Mathematical and Physical Sciences, Faculty of Science and Engineering, University of Chester, England, UK}
\email{adrian3@windowslive.com}

\address{Department of Mathematics \& Statistics, Northern Arizona University, Flagstaff, AZ 86001, USA}
\email{bahattinyildiz@gmail.com}

\subjclass[2010]{Primary 94B05; Secondary 11T71} \keywords{extremal
self-dual codes, neighbor, distance of self-dual codes, $k$th
neighbor, weight enumerator}

\begin{abstract}
In this work, we introduce the concept of distance between self-dual
codes, which generalizes the concept of a neighbor for self-dual
codes. Using the $k$-neighbors, we are able to construct extremal
binary self-dual codes of length 68 with new weight enumerators. We
construct 143 extremal binary self-dual codes of length 68 with new
weight enumerators including 42 codes with $\gamma=8$ in their
$W_{68,2}$ and 40 with $\gamma=9$ in their $W_{68,2}$. These
examples are the first in the literature for these $\gamma$ values.
This completes the theoretical list of possible values for $\gamma$
in $W_{68,2}$.
\end{abstract}

\maketitle

\section{Introduction}
Self-dual codes are a special class of linear codes. Because of the
many interesting properties that they have and the many different
fields that they are connected with, they have attracted a
considerable interest in coding theory research community.

One of the most active research areas in the field of self-dual
codes is the construction and classification of extremal binary
self-dual codes. Type I extremal binary self-dual codes of lengths
such as 64, 66, 68, etc. have parameters in their weight
enumerators, which have not all been found to exist. Hence, the
rcent years have seen a surge of activity in finding extremal binary
self-dual codes of various lengths with new weight enumerators. Many
different techniques have been employed in constructing these
extremal binary self-dual codes such as constructions over certain
rings, constructions through automorphism groups, neighboring
constructions, shadows, extensions, etc. \cite{anev},
\cite{buyuklieva}, \cite{QR}, \cite{GKTY}, \cite{alteredFC},
\cite{kaya}, \cite{pasa} are just a sample of the works that contain
these ideas and their applications in finding new extremal binary
self-dual codes.

In this work, we introduce the concept of ``distance" between
self-dual codes. After proving some theoretical results about the
distance we observe that the neighbor can be defined in terms of the
distance and this leads to the concept of ``$k$-range neighbors" or
``$k$-neighbors", which generalize the concept of neighbors of
self-dual codes. We then use these $k$-neighbors to construct
extremal self-dual codes of length 68 from a given self-dual code.
In particular we construct 139 new extremal binary self-dual codes
of length 68 with new weight enumerators, including the first
examples with $\gamma=8, 9$ in $W_{68,2}$ in the literature. This
completes the list of possible $\gamma$ values that can be found in
$W_{68,2}$. 42 of the codes we have constructed have $\gamma=8$ in
their weight enumerator, while 40 of them have $\gamma=9$ in their
weight enumerators.

The rest of the work is organized as follows: In section 2, we give
the preliminaries about self-dual codes and the neighbor
construction.In section 3, we introduce the concept of distance and
define the related generalization of the neighbors. In section 4, we
apply the generalized neighbors to construct extremal binary
self-dual codes of length 68 with new weight enumerators. We finish
the work with concluding remarks and directions for possible future
research.
\section{Preliminaries}
\subsection{Self-dual codes}

For $\overline{x} = (x_1,x_2, \dots, x_n)$ and $\overline{y} =
(y_1,y_2, \dots, y_n) \in \F_2^n$, we define
$$
\langle\overline{x}, \overline{y}\rangle = x_1y_1+x_2y_2+ \dots +
x_ny_n.
$$
This inner product leads to

\begin{definition}
Let $C$ be a binary linear code over  of length $n$, then we define
the {dual} of $C$ as
$$C^{\perp}:= \large\{ \overline{y} \in \F_2^n \large | \langle \overline{y},\overline{x}\rangle = 0, \:\:\:\: \forall
\overline{x} \in C \large \}.$$
\end{definition}

 Note that, if $C$ is a linear $[n,k]$-code, then $C^{\perp}$ is a linear $[n,n-k]$-code.

\begin{definition}
If $C \subseteq C^{\perp}$, then $C$ is called {\it self-orthogonal}
and it is called {\it self-dual} if $C = C^{\perp}.$
\end{definition}

\begin{definition}
Let $C$ be a self-dual binary code. If the Hamming weights of all
the codewords in $C$ are divisible by $4$, $C$ is called {\bf Type
II} (or doubly-even), otherwise it is called {\bf Type I} (or singly
even).
\end{definition}

The following theorem gives an upper bound for minimum distance of
self-dual codes:

\begin{theorem}$($\cite{Rains}, \cite{conway}$)$ Let $d_I(n)$ and $d_{II}(n)$ be the minimum distance of a Type I and Type II binary code of length $n$. then
$$d_{II}(n) \leq 4 \lfloor\frac{n}{24}\rfloor+4$$ and
$$d_{I}(n) \leq \left \{
\begin{array}{ll}
 4 \lfloor\frac{n}{24}\rfloor+4 & \textrm{if $n \not \equiv 22 \pmod{24}$}
\\
4 \lfloor\frac{n}{24}\rfloor+6 & \textrm {if $n \equiv 22
\pmod{24}$.}
\end{array}\right.$$
\end{theorem}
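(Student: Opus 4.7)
The plan is to use the invariant theory of weight enumerators, which is the standard route to this Mallows--Sloane--Gleason type bound. For a Type II code $C$ of length $n$, the weight enumerator $W_C(x,y)=\sum_{c\in C}x^{n-\mathrm{wt}(c)}y^{\mathrm{wt}(c)}$ is fixed by two substitutions: the MacWilliams transformation $(x,y)\mapsto \tfrac{1}{\sqrt{2}}(x+y,\,x-y)$ coming from self-duality, and $(x,y)\mapsto(x,iy)$ coming from divisibility of all weights by $4$. The finite group these generate has invariant ring equal to the polynomial ring $\mathbb{C}[\varphi_8,\varphi_{24}]$, where $\varphi_8$ is the weight enumerator of the extended Hamming code and $\varphi_{24}$ is a suitable degree-$24$ generator. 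Writing $n=24m+8r$ with $r\in\{0,1,2\}$, one expands $W_C$ as a $\mathbb{C}$-linear combination of the monomials $\varphi_8^{a}\varphi_{24}^{b}$ of the right total degree and then imposes the extremality conditions $A_2=A_4=\dots=A_{d-1}=0$. These conditions uniquely determine the coefficients in the expansion; pushing one step further forces $A_{4\lfloor n/24\rfloor+8}<0$, which is impossible, giving the stated Type II bound.

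For the Type I case, I would run the analogous argument with the smaller invariance group generated by the MacWilliams transformation together with $(x,y)\mapsto(x,-y)$ (codewords have even weight). The invariant ring here is $\mathbb{C}[x^2+y^2,\,\varphi_8]$, and repeating the extremality calculation yields the uniform bound $4\lfloor n/24\rfloor+4$. The sharper bound in the case $n\equiv 22\pmod{24}$ requires Rains's shadow refinement: letting $C_0$ denote the doubly-even subcode of $C$, the shadow $S=C_0^\perp\setminus C$ has a weight enumerator $S_C(x,y)$ related to $W_C$ by a second MacWilliams-type identity, and its coefficients $B_j$ must also be nonnegative integers. Since $S_C$ is supported on weights congruent to $n/2\pmod 4$, working out the analogous invariant expansion for the pair $(W_C,S_C)$ produces strictly more constraints than Gleason's theorem alone, and a straightforward coefficient comparison rules out minimum distance $4\lfloor n/24\rfloor+4$ but not $4\lfloor n/24\rfloor+6$ precisely when $n\equiv 22\pmod{24}$.

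The main obstacle I anticipate is the bookkeeping in the shadow refinement rather than the high-level strategy: one has to track which powers of $x^2-y^2$, $x^2+y^2$, and the shadow analogues appear, verify that the generators actually generate the full invariant ring (an application of Molien's formula on the orders of the relevant groups), and show that the extremal ansatz forces an $S_C$-coefficient to be a negative half-integer exactly when $n\equiv 22\pmod{24}$ and the Type I bound is pushed to $4\lfloor n/24\rfloor+4$. Since the statement is a direct citation to \cite{Rains} and \cite{conway}, a full derivation is outside the scope of this preliminaries section, and in the paper itself I would simply invoke the cited references after this outline.
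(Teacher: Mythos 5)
The paper gives no proof of this theorem: it is stated as background and attributed directly to \cite{Rains} and \cite{conway}, so your concluding remark that one would simply cite these references matches what the authors actually do. Judged as a summary of how those references argue, your Type II paragraph is essentially the standard Gleason--Mallows--Sloane route (the usual form of the last step is to show that the extremal enumerator has $A_{4\lfloor n/24\rfloor+4}>0$ via the B\"urmann--Lagrange formula, rather than that a later coefficient goes negative, but that is a cosmetic difference).

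The Type I paragraph, however, contains a genuine gap. The invariant ring $\mathbb{C}[x^2+y^2,\ x^2y^2(x^2-y^2)^2]$ for the smaller group has $\lfloor n/8\rfloor+1$ monomials in each relevant even degree $n$, so ``repeating the extremality calculation'' with it yields only the Mallows--Sloane singly-even bound $d\le 2\lfloor n/8\rfloor+2$, which is far weaker than $4\lfloor n/24\rfloor+4$ once $n$ is large. The $24$-periodicity of the Type I bound is not obtainable from Gleason's theorem alone: it is precisely the joint constraints on the pair $(W_C,S_C)$ --- nonnegativity and integrality of the shadow coefficients, the support condition modulo $4$, and Rains's further refinements --- that cut the number of free parameters down to a $24$-periodic count. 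In other words, the shadow is needed for the entire Type I statement, not only for the exceptional class $n\equiv 22\pmod{24}$; that class is simply where the shadow argument fails to exclude $d=4\lfloor n/24\rfloor+6$ (your sentence ``rules out minimum distance $4\lfloor n/24\rfloor+4$'' has the logic inverted, since the theorem permits that value and excludes larger ones). As written, your plan would not establish the stated Type I bound and would need to be restructured so that the shadow enters from the start rather than as a refinement of a single congruence class.
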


Self-dual codes that attain the bounds given in the previous theorem
are called {\it extremal}.

\subsection{The neighbor construction}
Two self-dual codes of length $n$ are called {\it neighbors} if
their intersection is a code of dimension $\frac{n}{2}-1$. This idea
has been used extensively in the literature  to construct new
self-dual codes from an existing one. For some of the works that
have used this idea, we can refer to \cite{brualdi}, \cite{QR},
\cite{alteredFC} and references therein.

Given a self-dual code $C$, a vector $x\in {\mathbb{F}}_{2}^{n}-C$ is picked and then $D$ is formed by letting $%
D=\left\langle \left\langle x\right\rangle ^{\bot }\cap
C,x\right\rangle $. The search for $D$ can be made efficient by
using the standard form of the generator matrix of $C$, which lets
one to fix the first $n/2$ entries of $x$ without loss of
generality.  Usually in practical applications the first $n/2$
entries of $x$ are set to be $0$.

\section{Distance between self-dual codes and generalized neighbors}
To generalize the notion of a neighbor, we first begin with the
following definition of a distance between two self-dual codes:
\begin{definition}
Let $C_1$ and $C_2$ be two binary self-dual codes of length $n$. The
{\it neighbor-distance} between $C_1$ and $C_2$ is defined as
$$d_N(C_1, C_2) = \frac{n}{2}-dim(C_1\cap C_2).$$
\end{definition}

\begin{proposition}
$d_N$ is a metric on the set of all binary self-dual codes of length
$n$.
\end{proposition}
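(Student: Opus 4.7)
The plan is to verify the four metric axioms in the standard order: non-negativity, the identity of indiscernibles, symmetry, and the triangle inequality. The first three are essentially immediate from the definition combined with the fact that a binary self-dual code of length $n$ has dimension exactly $n/2$. Specifically, since $C_1 \cap C_2 \subseteq C_1$ and $\dim(C_1) = n/2$, we get $\dim(C_1\cap C_2)\le n/2$, yielding $d_N \ge 0$. For the identity axiom, $d_N(C_1,C_2)=0$ forces $\dim(C_1\cap C_2)=n/2=\dim(C_1)=\dim(C_2)$, and since $C_1\cap C_2$ is a subspace of both, this forces $C_1=C_1\cap C_2=C_2$; the converse is trivial. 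Symmetry is obvious because intersection is symmetric.

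The main work is the triangle inequality: given self-dual codes $C_1,C_2,C_3$ of length $n$, I want to show
\[
\tfrac{n}{2}-\dim(C_1\cap C_3)\;\le\;\bigl(\tfrac{n}{2}-\dim(C_1\cap C_2)\bigr)+\bigl(\tfrac{n}{2}-\dim(C_2\cap C_3)\bigr),
\]
or equivalently
\[
\dim(C_1\cap C_2)+\dim(C_2\cap C_3)\;\le\;\dim(C_1\cap C_3)+\tfrac{n}{2}.
\]
The key idea is to set $A=C_1\cap C_2$ and $B=C_2\cap C_3$, so that $A$ and $B$ are both subspaces of $C_2$, which has dimension $n/2$. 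The standard dimension formula $\dim(A)+\dim(B)=\dim(A+B)+\dim(A\cap B)$ together with $\dim(A+B)\le\dim(C_2)=n/2$ gives
\[
\dim(A\cap B)\;\ge\;\dim(A)+\dim(B)-\tfrac{n}{2}.
\]
Finally, $A\cap B = C_1\cap C_2\cap C_3 \subseteq C_1\cap C_3$, so $\dim(C_1\cap C_3)\ge \dim(A\cap B)$, and the desired inequality follows by substitution.

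The mild obstacle, such as it is, is recognizing that the triangle inequality reduces to a Grassmann-style dimension inequality inside the ambient self-dual code $C_2$; once one places $C_1\cap C_2$ and $C_2\cap C_3$ inside $C_2$, everything collapses to one application of the modular/dimension formula. No appeal to the self-dual property is needed beyond the fact that all three codes have dimension $n/2$, which is what makes the constant $n/2$ appear uniformly on both sides. I would present the proof as four short paragraphs with the triangle inequality computation written out as a single displayed chain of (in)equalities.
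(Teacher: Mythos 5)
Your verification of the first three axioms matches the paper's almost verbatim, but your treatment of the triangle inequality is genuinely different --- and, in fact, correct where the paper's own argument is not. The paper asserts that from $(C_1\cap C_2)\cup(C_2\cap C_3)\subseteq C_2$ one may conclude $\dim(C_1\cap C_2)+\dim(C_2\cap C_3)\le\dim(C_2)=\tfrac{n}{2}$; this inference is false (two subspaces of $C_2$ need not have dimensions summing to at most $\dim C_2$ --- take $C_1=C_2=C_3$, where the left side equals $n$), and the paper then reaches the needed inequality $\dim(C_1\cap C_2)+\dim(C_2\cap C_3)-\dim(C_1\cap C_3)\le\tfrac{n}{2}$ only by passing through that invalid step. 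Your route via the Grassmann formula, $\dim A+\dim B=\dim(A+B)+\dim(A\cap B)$ with $A=C_1\cap C_2$ and $B=C_2\cap C_3$, combined with $A+B\subseteq C_2$ and $A\cap B=C_1\cap C_2\cap C_3\subseteq C_1\cap C_3$, delivers exactly the correct inequality
\[
\dim(C_1\cap C_2)+\dim(C_2\cap C_3)\le\tfrac{n}{2}+\dim(C_1\cap C_3),
\]
which is what the triangle inequality requires. In short: your proof is the standard submodularity argument for the ``codimension of intersection'' metric, it is sound, and it repairs rather than reproduces the paper's reasoning. You are also right that self-duality enters only through the uniform dimension $n/2$; nothing about duality itself is used.
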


\begin{proof}
Since $dim(C_1\cap C_2) \leq dim(C_1)=dim(C_2) =\frac{n}{2}$, we
have $d_N(C_1, C_2) \geq 0$ for all self-dual codes $C_1, C_2$.

Next, observe that if $d_N(C_1, C_2) = 0$, this means
$$dim(C_1\cap C_2) = \frac{n}{2} = dim(C_1)=dim(C_2),$$
which implies $C_1=C_2=C_1\cap C_2$. Conversely, if $C_1=C_2$, then
$d_N(C_1,C_2) = \frac{n}{2}-\frac{n}{2}=0.$

By the definition, it is clear that $d_N(C_1,C_2) = d_N(C_2, C_1)$.

For the triangle inequality, assume that $C_1, C_2, C_3$ are
self-dual codes. Observe that
$$(C_1\cap C_2) \cup (C_2\cap C_3) = C_2\cap(C_1\cup C_3) \subseteq C_2.$$
which implies
$$dim(C_1\cap C_2)+dim(C_2\cap C_3) \leq dim(C_2) = \frac{n}{2}.$$
Thus we have
$$dim(C_1\cap C_2)+dim(C_2\cap C_3) -dim(C_1\cap C_3) \leq \frac{n}{2}.$$
Adding $\frac{n}{2}$ to both sides and sending $dim(C_1\cap
C_2)+dim(C_2\cap C_3)$ over to the right side of the equation, we
get
\begin{align*}
    d_N(C_1, C_3) & =  \frac{n}{2}-dim(C_1\cap C_3) \\
    & \leq \frac{n}{2}-dim(C_1\cap C_2)+\frac{n}{2}-dim(C_2\cap C_3)\\
    & \leq d_N(C_1, C_2)+d_N(C_2, C_3).
\end{align*}
\end{proof}

The next proposition shows that self-dual codes cannot have the
maximum distance to each other:
\begin{proposition}
Let $C_1$ and $C_2$ be two binary self-dual codes of length $n$.
Then $d_N(C_1,C_2) < \frac{n}{2}$.
\end{proposition}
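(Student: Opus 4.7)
The plan is to exhibit an explicit nonzero vector that must lie in $C_1 \cap C_2$, which forces $\dim(C_1 \cap C_2) \geq 1$ and hence $d_N(C_1,C_2) \leq \frac{n}{2}-1 < \frac{n}{2}$.

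The natural candidate is the all-ones vector $\mathbf{1} = (1,1,\dots,1)$. I would argue that $\mathbf{1}$ belongs to every binary self-dual code of length $n$. Indeed, if $C$ is self-dual, then $C \subseteq C^\perp$, so for every $c \in C$ we have $\langle c,c\rangle = 0$, which means the Hamming weight of $c$ is even. Consequently, for any $c \in C$,
$$\langle \mathbf{1}, c\rangle = \sum_{i=1}^n c_i \equiv \mathrm{wt}(c) \equiv 0 \pmod{2},$$
so $\mathbf{1} \in C^\perp = C$. Applying this to both $C_1$ and $C_2$, we conclude $\mathbf{1} \in C_1 \cap C_2$, which gives the nontrivial intersection needed.

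The only subtle point is that self-duality over $\mathbb{F}_2$ is required: the argument rests on the fact that self-orthogonality already forces all codewords to have even weight, and I would state this explicitly since it is the crux of why $\mathbf{1}$ is in the code. (Note also that the length $n$ of a binary self-dual code is automatically even, so $\mathbf{1}$ itself has even weight and is consistent with lying in $C$.) Once this is in place, the conclusion is immediate from the definition of $d_N$, and there is no real obstacle — the whole proof is essentially the observation that self-dual binary codes share the all-ones vector.
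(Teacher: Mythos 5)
Your proof is correct and follows exactly the same route as the paper's: both rest on the observation that the all-ones vector lies in every binary self-dual code, hence in $C_1\cap C_2$, forcing $\dim(C_1\cap C_2)\geq 1$. The only difference is that the paper cites this fact as well known, whereas you supply the short justification via even weights, which is a reasonable addition.
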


\begin{proof}
It is well known that if $C$ is any binary self-dual code, then
$(1,1, \dots, 1) \in C$. thus $\overline{1} \in C_1\cap C_2$, which
implies that $dim(C_1\cap C_2) \geq 1$. But then
$$d_N(C_1, C_2) = \frac{n}{2}-dim(C_1\cap C_2) \leq \frac{n}{2}-1 < \frac{n}{2}.$$
\end{proof}

\bigskip
\noindent {\bf Question:} Is there an upper bound on the distance
between two self-dual codes? The proposition shows that the distance
cannot be larger than $\frac{n}{2}-1$. It is an open question
whether this upper bound can be reduced further.

\bigskip

We now define {\it $k$-range neighbor} and {\it $k$-neighbor} of a
code:
\begin{definition}
Let $C_1$ and $C_2$ be two self-dual codes. $C_1$ and $C_2$ are said
to be {\it $k$-range neighbors} if $d_N(C_1, C_2) \leq k$ and they
are called {\it $k$-neighbors} if $d_N(C_1,C_2)=k$.
\end{definition}

\begin{remark}
The {\it neighbor} of a self-dual code is well known in the
literature and it corresponds to a $1$-neighbor in our context.
\end{remark}

\begin{remark}
The concept of a $k$-range neighbor code can be more useful than the
strict $k$-neighbor codes, because of the following observation:

Suppose $C_1$ and $C_2$ are self-dual binary codes with generator
matrices $[I_{n/2}|M_1]$ and $[I_{n/2}|M_2]$, respectively, where
$$M_1 = \left[
\begin{array}{c}
\overline{r}_1 \\
\hline
\overline{r}_2 \\
\hline
\overline{r}_3 \\
\hline
\vdots \\
\hline \overline{r}_{n/2}
\end{array}
\right], \:\:\:\:\:\:\:\:\:\:\:\:\:\: M_2 = \left[
\begin{array}{c}
\overline{s}_1 \\
\hline
\overline{s}_2 \\
\hline
\overline{s}_3 \\
\hline
\vdots \\
\hline \overline{s}_{n/2}
\end{array}
\right].$$ Here $\overline{r}_i$ and $\overline{s}_j$ are the rows
of $M_1$ and $M_2$ respectively. If $\overline{r}_{i} =
\overline{s}_i$ for $i=k+1, k+2, \dots, n/2$, then $C_1$ and $C_2$
are $k$-range neighbors.
\end{remark}

\begin{remark}\label{rem}
As we observed above, the ordinary neighbor of a code $C$ is a
$1$-neighbor. We can also observe that, the neighbor a $1$-neighbor
of $C$ is a $2$-range neighbor of $C$. This can be generalized into
considering the neighbor of a neighbor of a neighbor etc. of a code
as a $k$-range neighbor of the original code.
\end{remark}

\section{Applications of $k$-range neighbor codes to extremal self-dual codes}
In this section we will give an equivalent description for the
$k$-range neighbors and use them to construct new extremal binary
self-dual codes. Let $\mathcal{N}_{(0)}$ be a binary self-dual code
of length $2n$. Let $x_0 \in \F_{2}^{2n}-\mathcal{N}_{(0)}$, define

\[
\mathcal{N}_{(i+1)}=\left\langle \left\langle x_i \right\rangle
^{\bot }\cap \mathcal{N}_{(i)},x_i\right\rangle
\]

\noindent where $\mathcal{N}_{(i+1)}$ is the  neighbour of
$\mathcal{N}_{(i)}$ and $x_i \in \F_{2}^{2n}-\mathcal{N}_{(i)}$.

It is not hard to see that $\mathcal{N}_{(i)}$ defined in this way
is an $i$-range neighbor of $\mathcal{N}_{(0)}$ as was observed
above in Remark \ref{rem}. In what follows, we will apply this idea
to search for extremal binary self-dual codes from $k$-range
neighbors of a known code. We use Magma Algebra System
(\cite{magma}) for our searches.

\subsection{Numerical results from $i$-range neighbours}

The possible weight enumerator of an extremal binary self-dual code
of length 68 (of parameters $\left[ 68,34,12\right]$) is in one of
the following forms by \cite{buyuklieva,harada, dougherty1}:
\begin{eqnarray*}
W_{68,1} &=&1+\left( 442+4\beta \right) y^{12}+\left( 10864-8\beta
\right)
y^{14}+\cdots ,104\leq \beta \leq 1358, \\
W_{68,2} &=&1+\left( 442+4\beta \right) y^{12}+\left( 14960-8\beta
-256\gamma \right) y^{14}+\cdots
\end{eqnarray*}%
where $0\leq \gamma \leq 9$. Recently, Yankov et al. constructed the
first examples of codes with a weight enumerator for $\gamma =7$ in
$W_{68,2}$ in \cite{anev}. Together with these, the existence of
codes in $W_{68,2}$ is known for many values. In order to save space
we only give the lists for $\gamma =5$, $\gamma =6$ and  $\gamma
=7$, which are updated in this work;
\[
\begin{split}
\gamma &=5\text{ with }\beta \in \left\{\text{101,105,109,111,$...$,182,187,189,191,192,193,201,202,213}\right\}\\
\gamma &=6\text{ with }\beta \in \left\{ 133,137,139,\ldots,174,176,177,184,192,210\right\}\\
\gamma  &=7\text{ with }\beta \in \left\{
7m|m=14,\ldots,39,42\right\}
\end{split}
\]

\noindent Let $\mathcal{N}_{(0)}$ be the extremal binary self-dual
code of length $68$ ($W_{68,2}$) with the parameters $\gamma=5$ and
$\beta=213$ which was recently constructed in
\cite{JoeAbidinAdrian}. Its binary generating matrix is given by
$(I_{34}|A)$ where

 $$A= \left( \begin{smallmatrix}
0&0&0&1&0&0&0&1&1&1&1&0&1&1&1&1&1&1&0&0&1&1&1&1&0&1&0&1&0&0&0&1&1&0\\
0&1&1&0&0&0&1&0&0&1&1&1&1&0&1&0&1&0&0&0&0&1&1&0&1&0&0&0&1&1&0&0&1&0\\
0&1&0&0&1&0&1&0&1&1&0&1&0&0&0&0&1&0&1&1&0&1&1&1&1&0&1&0&0&0&0&0&0&1\\
1&0&0&0&0&1&0&1&0&0&0&1&1&1&1&1&1&0&1&1&0&1&0&0&0&1&1&0&0&0&1&1&1&0\\
0&0&1&0&1&1&1&0&0&1&1&1&1&0&0&0&1&0&1&1&1&0&1&0&1&0&0&0&0&0&0&1&1&1\\
0&0&0&1&1&1&0&1&1&0&1&1&0&1&0&0&0&1&1&1&0&1&0&1&0&1&0&0&0&0&1&0&1&1\\
0&1&1&0&1&0&0&0&0&0&1&1&0&1&1&1&1&0&1&0&0&0&1&1&0&0&1&0&1&0&0&1&0&0\\
1&0&0&1&0&1&0&0&1&1&0&0&0&1&0&0&1&0&0&1&1&1&0&0&0&0&1&0&0&1&0&1&0&0\\
1&0&1&0&0&1&0&1&0&1&1&0&0&0&0&0&0&1&0&1&0&0&1&0&0&0&0&0&1&0&1&0&1&1\\
1&1&1&0&1&0&0&1&1&1&1&1&1&0&1&0&0&0&1&0&0&1&1&1&1&1&1&0&1&0&1&1&1&1\\
1&1&0&0&1&1&1&0&1&1&1&0&0&0&1&1&0&1&1&1&0&0&1&1&0&1&1&1&0&1&0&1&1&0\\
0&1&1&1&1&1&1&0&0&1&0&0&0&1&1&0&1&1&1&1&1&0&1&0&0&1&1&0&0&1&1&1&0&1\\
1&1&0&1&1&0&0&0&0&1&0&0&1&0&1&0&1&1&1&1&0&0&0&1&0&1&0&1&1&0&0&1&0&1\\
1&0&0&1&0&1&1&1&0&0&0&1&0&0&0&0&1&0&1&1&1&0&1&1&0&1&1&1&1&0&1&1&1&0\\
1&1&0&1&0&0&1&0&0&1&1&1&1&0&1&0&0&0&0&1&0&1&0&0&1&1&0&1&0&1&1&0&0&1\\
1&0&0&1&0&0&1&0&0&0&1&0&0&0&0&0&0&1&1&0&0&0&0&1&0&0&1&1&0&1&0&0&1&0\\
1&1&1&1&1&0&1&1&0&0&0&1&1&1&0&0&1&0&0&1&1&1&0&0&1&1&1&0&0&1&0&1&1&1\\
1&0&0&0&0&1&0&0&1&0&1&1&1&0&0&1&0&0&1&0&0&1&0&1&0&0&0&0&0&1&1&1&1&1\\
0&0&1&1&1&1&1&0&0&1&1&1&1&1&0&1&0&1&0&0&0&1&1&1&0&0&1&1&1&0&0&0&1&0\\
0&0&1&1&1&1&0&1&1&0&1&1&1&1&1&0&1&0&0&0&1&0&1&1&0&0&1&1&0&1&0&0&0&1\\
1&0&0&0&1&0&0&1&0&0&0&1&0&1&0&0&1&0&0&1&1&1&0&1&0&0&1&0&0&1&0&0&0&1\\
1&1&1&1&0&1&0&1&0&1&0&0&0&0&1&0&1&1&1&0&1&0&0&0&1&1&1&1&0&1&1&0&1&0\\
1&1&1&0&1&0&1&0&1&1&1&1&0&1&0&0&0&0&1&1&0&0&0&1&0&1&0&1&0&1&1&0&1&1\\
1&0&1&0&0&1&1&0&0&1&1&0&1&1&0&1&0&1&1&1&1&0&1&1&0&1&1&1&0&1&0&0&1&1\\
0&1&1&0&1&0&0&0&0&1&0&0&0&0&1&0&1&0&0&0&0&1&0&0&1&1&1&1&1&1&1&1&1&1\\
1&0&0&1&0&1&0&0&0&1&1&1&1&1&1&0&1&0&0&0&0&1&1&1&1&1&0&0&1&1&0&0&1&1\\
0&0&1&1&0&0&1&1&0&1&1&1&0&1&0&1&1&0&1&1&1&1&0&1&1&0&0&1&0&1&1&0&1&1\\
1&0&0&0&0&0&0&0&0&0&1&0&1&1&1&1&0&0&1&1&0&1&1&1&1&0&1&1&0&1&0&0&1&1\\
0&1&0&0&0&0&1&0&1&1&0&0&1&1&0&0&0&0&1&0&0&0&0&0&1&1&0&0&1&1&1&0&1&0\\
0&1&0&0&0&0&0&1&0&0&1&1&0&0&1&1&1&1&0&1&1&1&1&1&1&1&1&1&1&1&1&0&0&1\\
0&0&0&1&0&1&0&0&1&1&0&1&0&1&1&0&0&1&0&0&0&1&1&0&1&0&1&0&1&1&0&0&0&1\\
1&0&0&1&1&0&1&1&0&1&1&1&1&1&0&0&1&1&0&0&0&0&0&0&1&0&0&0&0&0&0&1&1&0\\
1&1&0&1&1&1&0&0&1&1&1&0&0&1&0&1&1&1&1&0&0&1&1&1&1&1&1&1&1&0&0&1&0&1\\
0&0&1&0&1&1&0&0&1&1&0&1&1&0&1&0&1&1&0&1&1&0&1&1&1&1&1&1&0&1&1&0&1&0
\end{smallmatrix}\right).$$

\bigskip

Implementing the formula described above to this code
$\mathcal{N}_{(0)}$, we obtain:

\begin{table}[H]\caption{$i$-range neighbour of  $\mathcal{N}_{(0)}$}\label{neighbors1}
\begin{center}\scalebox{0.9}{
\begin{tabular}{ccccccc}
\hline $i$ & $\mathcal{N}_{(i+1)}$   & $x_i$ &
$|Aut(\mathcal{N}_{(i+1)})   |$ & $\gamma$ & $\beta$  \\ \hline
\hline $0$ & $\mathcal{N}_{(1)}$ &
$(1100000101101111011001110100000100)$ & $1$ & $6$ & $210$   \\
\hline $1$ & $\mathcal{N}_{(2)}$ &
$(0111111111110110010100110111001100)$ & $1$ & $\textbf{7}$ &
$\textbf{212}$   \\ \hline $2$ & $\mathcal{N}_{(3)}$ &
$(0111010010010101001000101110011001)$ & $1$ & $\textbf{8}$ &
$\textbf{221}$   \\ \hline $3$ & $\mathcal{N}_{(4)}$ &
$(1000000111110011101001110001110000)$ & $1$ & $\textbf{9}$ &
$\textbf{221}$   \\ \hline
\end{tabular}}
\end{center}
\end{table}

\subsection{Neighbours of Neighbours}

\noindent In this section, we separately consider neighbours of
$\mathcal{N}_{(0)}$, $\mathcal{N}_{(1)}$, $\mathcal{N}_{(2)}$,
$\mathcal{N}_{(3)}$ and $\mathcal{N}_{(4)}$.

\begin{table}[H]\caption{Neighbours of  $\mathcal{N}_{(0)}$}\label{neighbors1}
\begin{center}\scalebox{0.9}{
\begin{tabular}{cccccc}
\hline $\mathcal{C}_{i}$   & $(x_{35},x_{36},...,x_{68})$ &
$|Aut(\mathcal{N}_{68,i})   |$ & $\gamma$ & $\beta$  \\ \hline
\hline $\mathcal{C}_{1}$ &  $(1001100000010100010100001111100011)$ &
$2$ & $\textbf{5}$ & $\textbf{195}$  \\ \hline $\mathcal{C}_{2}$ &
$(1000010001011010000011010000011010)$ & $1$ & $\textbf{5}$ &
$\textbf{198}$  \\ \hline $\mathcal{C}_{3}$ &
$(0111101000110110001011101100010000)$ & $1$ & $\textbf{5}$ &
$\textbf{200}$  \\ \hline $\mathcal{C}_{4}$ &
$(0111001101010010011001000101101010)$ & $1$ & $\textbf{5}$ &
$\textbf{202}$  \\ \hline $\mathcal{C}_{5}$ &
$(0100101101000111111110110101110111)$ & $2$ & $\textbf{5}$ &
$\textbf{211}$  \\ \hline $\mathcal{C}_{6}$ &
$(0011011100110001100010000000100100)$ & $1$ & $\textbf{6}$ &
$\textbf{198}$  \\ \hline $\mathcal{C}_{7}$ &
$(0111011111101001111101101111001000)$ & $1$ & $\textbf{6}$ &
$\textbf{204}$  \\ \hline
\end{tabular}}
\end{center}
\end{table}

\begin{table}[H]\caption{Neighbours of  $\mathcal{N}_{(1)}$}\label{neighbors1}
\begin{center}\scalebox{0.9}{
\begin{tabular}{cccc|cccccc}
\hline $\mathcal{C}_{i}$   & $(x_{35},x_{36},...,x_{68})$  &
$\gamma$ & $\beta$ & $\mathcal{C}_{i}$   &
$(x_{35},x_{36},...,x_{68})$  & $\gamma$ & $\beta$ \\ \hline  \hline
$\mathcal{C}_{8}$ &  $(1001010111010111110011100111000011)$  &
$\textbf{6}$ & $\textbf{175}$   & $\mathcal{C}_{9}$ &
$(0001011110111110011101001111111100)$  & $\textbf{6}$ &
$\textbf{177}$   \\ \hline $\mathcal{C}_{10}$ &
$(1011110110111010111010010111101111)$  & $\textbf{6}$ &
$\textbf{179}$   & $\mathcal{C}_{11}$ &
$(1011011001101100010101001010001111)$  & $\textbf{6}$ &
$\textbf{181}$   \\ \hline $\mathcal{C}_{12}$ &
$(0111001000010101110001001100111100)$  & $\textbf{6}$ &
$\textbf{182}$   & $\mathcal{C}_{13}$ &
$(0111111111011111101100100100001110)$  & $\textbf{6}$ &
$\textbf{183}$   \\ \hline $\mathcal{C}_{14}$ &
$(1011111001001110011110000010100011)$  & $\textbf{6}$ &
$\textbf{185}$   & $\mathcal{C}_{15}$ &
$(1010100011111100010011111101001101)$  & $\textbf{6}$ &
$\textbf{186}$   \\ \hline $\mathcal{C}_{16}$ &
$(1011010111110011001011000100111011)$  & $\textbf{6}$ &
$\textbf{187}$   & $\mathcal{C}_{17}$ &
$(0011110011110111111101101100110100)$  & $\textbf{6}$ &
$\textbf{188}$   \\ \hline $\mathcal{C}_{18}$ &
$(0000000010010101010011001010001011)$  & $\textbf{6}$ &
$\textbf{189}$   & $\mathcal{C}_{19}$ &
$(1011101110011101111110100101011100)$  & $\textbf{6}$ &
$\textbf{190}$   \\ \hline $\mathcal{C}_{20}$ &
$(1111000000010110001111001111010101)$  & $\textbf{6}$ &
$\textbf{191}$   & $\mathcal{C}_{21}$ &
$(1010111010101011100011100011001111)$  & $\textbf{6}$ &
$\textbf{193}$   \\ \hline $\mathcal{C}_{22}$ &
$(1000001101010100110101000011000101)$  & $\textbf{6}$ &
$\textbf{194}$   & $\mathcal{C}_{23}$ &
$(0101011110100101000000011010001111)$  & $\textbf{6}$ &
$\textbf{195}$   \\ \hline $\mathcal{C}_{24}$ &
$(1111011101111101110110100010000111)$  & $\textbf{6}$ &
$\textbf{196}$   & $\mathcal{C}_{25}$ &
$(1111011010010110011101100001000110)$  & $\textbf{6}$ &
$\textbf{197}$   \\ \hline $\mathcal{C}_{26}$ &
$(1011110001001000100001000110100000)$  & $\textbf{6}$ &
$\textbf{199}$   & $\mathcal{C}_{27}$ &
$(1001010010110100100000001100000101)$  & $\textbf{6}$ &
$\textbf{200}$   \\ \hline $\mathcal{C}_{28}$ &
$(1010010010100011111100011100111010)$  & $\textbf{6}$ &
$\textbf{201}$   & $\mathcal{C}_{29}$ &
$(0100010011101100010110001010110000)$  & $\textbf{6}$ &
$\textbf{202}$   \\ \hline $\mathcal{C}_{30}$ &
$(0100100000011110000010011000010110)$  & $\textbf{6}$ &
$\textbf{206}$   & $\mathcal{C}_{31}$ &
$(0001110111010001111011010001011111)$  & $\textbf{6}$ &
$\textbf{207}$   \\ \hline $\mathcal{C}_{32}$ &
$(0010110010011001111110101000011110)$  & $\textbf{7}$ &
$\textbf{184}$   & $\mathcal{C}_{33}$ &
$(0011001100100010111110000000011001)$  & $\textbf{7}$ &
$\textbf{185}$   \\ \hline
\end{tabular}}
\end{center}
\end{table}

\begin{table}[H]\caption{Neighbours of  $\mathcal{N}_{(2)}$}\label{neighbors2}
\begin{center}\scalebox{0.9}{
\begin{tabular}{cccc|cccccc}
\hline $\mathcal{C}_{i}$   & $(x_{35},x_{36},...,x_{68})$  &
$\gamma$ & $\beta$ & $\mathcal{C}_{i}$   &
$(x_{35},x_{36},...,x_{68})$  & $\gamma$ & $\beta$  \\ \hline
\hline $\mathcal{C}_{34}$ &  $(1101111111101111011110011101101110)$
& $\textbf{7}$ & $\textbf{174}$   & $\mathcal{C}_{35}$ &
$(1001101001000111010110110111111111)$  & $\textbf{7}$ &
$\textbf{177}$   \\ \hline $\mathcal{C}_{36}$ &
$(1011010010001010111100010010000100)$  & $\textbf{7}$ &
$\textbf{178}$   & $\mathcal{C}_{37}$ &
$(1101111001010110110111010110001111)$  & $\textbf{7}$ &
$\textbf{179}$   \\ \hline $\mathcal{C}_{38}$ &
$(1011101100100101100110111101101111)$  & $\textbf{7}$ &
$\textbf{181}$   & $\mathcal{C}_{39}$ &
$(1001111111011111110110010001001110)$  & $\textbf{7}$ &
$\textbf{183}$   \\ \hline $\mathcal{C}_{40}$ &
$(1101010001010110001001111001100010)$  & $\textbf{7}$ &
$\textbf{186}$   & $\mathcal{C}_{41}$ &
$(0110111111000000011011000001110001)$  & $\textbf{7}$ &
$\textbf{187}$   \\ \hline $\mathcal{C}_{42}$ &
$(0010011000010000000011111111111010)$  & $\textbf{7}$ &
$\textbf{188}$   & $\mathcal{C}_{42}$ &
$(1001101101101011111111010100101101)$  & $\textbf{7}$ &
$\textbf{190}$   \\ \hline $\mathcal{C}_{43}$ &
$(0110000001011010011001111110100010)$  & $\textbf{7}$ &
$\textbf{191}$   & $\mathcal{C}_{44}$ &
$(1011000011111010100011111011100011)$  & $\textbf{7}$ &
$\textbf{192}$   \\ \hline $\mathcal{C}_{45}$ &
$(0100001110010111010110101010011110)$  & $\textbf{7}$ &
$\textbf{193}$   & $\mathcal{C}_{46}$ &
$(0000001101111110100001100001000000)$  & $\textbf{7}$ &
$\textbf{194}$   \\ \hline $\mathcal{C}_{47}$ &
$(0100110110001011101001011000110001)$  & $\textbf{7}$ &
$\textbf{195}$   & $\mathcal{C}_{48}$ &
$(1111010100000010111100100101110101)$  & $\textbf{7}$ &
$\textbf{197}$   \\ \hline $\mathcal{C}_{49}$ &
$(1100100000001001100110010111111111)$  & $\textbf{7}$ &
$\textbf{198}$   & $\mathcal{C}_{50}$ &
$(0000110011100001111010110100110001)$  & $\textbf{7}$ &
$\textbf{199}$   \\ \hline
\end{tabular}}
\end{center}
\end{table}

\begin{table}[H]\caption{Neighbours of  $\mathcal{N}_{(3)}$}\label{neighbors3}
\begin{center}\scalebox{0.8}{
\begin{tabular}{cccc|cccccc}
\hline $\mathcal{C}_{i}$   & $(x_{35},x_{36},...,x_{68})$ &
$\gamma$ & $\beta$ &  $\mathcal{C}_{i}$   &
$(x_{35},x_{36},...,x_{68})$ &  $\gamma$ & $\beta$ \\ \hline  \hline
$\mathcal{C}_{51}$ &  $(0101000010100010000111100101011100)$  &
$\textbf{7}$ & $\textbf{171}$   & $\mathcal{C}_{52}$ &
$(0000011101001011001010111100001110)$  & $\textbf{7}$ &
$\textbf{173}$   \\ \hline $\mathcal{C}_{53}$ &
$(0000011101101010010000110000101000)$  & $\textbf{7}$ &
$\textbf{176}$   & $\mathcal{C}_{54}$ &
$(1011010010100001010000111011100110)$  & $\textbf{7}$ &
$\textbf{180}$   \\ \hline $\mathcal{C}_{55}$ &
$(1011110011111111111100111110100111)$  & $\textbf{8}$ &
$\textbf{181}$   & $\mathcal{C}_{56}$ &
$(1101111111010111111111100010110111)$  & $\textbf{8}$ &
$\textbf{186}$   \\ \hline $\mathcal{C}_{57}$ &
$(0000110100100011011001001101111010)$  & $\textbf{8}$ &
$\textbf{187}$   & $\mathcal{C}_{58}$ &
$(1101000111000011001010010001000000)$  & $\textbf{8}$ &
$\textbf{189}$   \\ \hline $\mathcal{C}_{59}$ &
$(0000001011100100110100101111000100)$  & $\textbf{8}$ &
$\textbf{190}$   & $\mathcal{C}_{60}$ &
$(0111011011011011010010101101100011)$  & $\textbf{8}$ &
$\textbf{192}$   \\ \hline $\mathcal{C}_{61}$ &
$(0011011011110001000111111111011110)$  & $\textbf{8}$ &
$\textbf{191}$   & $\mathcal{C}_{62}$ &
$(1010111011000111100110111001110111)$  & $\textbf{8}$ &
$\textbf{193}$   \\ \hline $\mathcal{C}_{63}$ &
$(0010010011101001110101011010111100)$  & $\textbf{8}$ &
$\textbf{194}$   & $\mathcal{C}_{64}$ &
$(0001001001010000111101111001110111)$  & $\textbf{8}$ &
$\textbf{195}$   \\ \hline $\mathcal{C}_{65}$ &
$(0001101011101101100010111110110011)$  & $\textbf{8}$ &
$\textbf{196}$   & $\mathcal{C}_{66}$ &
$(0100100001010000001010001111100010)$  & $\textbf{8}$ &
$\textbf{197}$   \\ \hline $\mathcal{C}_{67}$ &
$(1011100101000101100011110111101011)$  & $\textbf{8}$ &
$\textbf{198}$   & $\mathcal{C}_{68}$ &
$(0011011111101011011011111011110011)$  & $\textbf{8}$ &
$\textbf{199}$   \\ \hline $\mathcal{C}_{69}$ &
$(1111000101100111100010101010000001)$  & $\textbf{8}$ &
$\textbf{200}$   & $\mathcal{C}_{70}$ &
$(0011000110010100110010000110000001)$  & $\textbf{8}$ &
$\textbf{201}$   \\ \hline $\mathcal{C}_{71}$ &
$(1110101001000010010100101000011100)$  & $\textbf{8}$ &
$\textbf{202}$   & $\mathcal{C}_{72}$ &
$(0110111010011110110001011011101001)$  & $\textbf{8}$ &
$\textbf{203}$   \\ \hline $\mathcal{C}_{73}$ &
$(1001100101111110111101011001101110)$  & $\textbf{8}$ &
$\textbf{204}$   & $\mathcal{C}_{74}$ &
$(0000100111111101000010110011001001)$  & $\textbf{8}$ &
$\textbf{205}$   \\ \hline $\mathcal{C}_{75}$ &
$(1011001000010010011100101011000100)$  & $\textbf{8}$ &
$\textbf{206}$   & $\mathcal{C}_{76}$ &
$(0101111110001111110000111111111011)$  & $\textbf{8}$ &
$\textbf{207}$   \\ \hline $\mathcal{C}_{77}$ &
$(0110101010100001110101011010110110)$  & $\textbf{8}$ &
$\textbf{208}$   & $\mathcal{C}_{78}$ &
$(0001111000110101011111001111101111)$  & $\textbf{8}$ &
$\textbf{209}$   \\ \hline $\mathcal{C}_{79}$ &
$(1111011101111110000011100111111011)$  & $\textbf{8}$ &
$\textbf{210}$   & $\mathcal{C}_{80}$ &
$(1101010100000100000001110100010001)$  & $\textbf{8}$ &
$\textbf{211}$   \\ \hline $\mathcal{C}_{81}$ &
$(0011110101110001000001111001110000)$  & $\textbf{8}$ &
$\textbf{212}$  & $\mathcal{C}_{82}$ &
$(1100011110110111110101000101011111)$  & $\textbf{8}$ &
$\textbf{213}$   \\ \hline $\mathcal{C}_{83}$ &
$(0101011001011011111001010100001000)$  & $\textbf{8}$ &
$\textbf{214}$   & $\mathcal{C}_{84}$ &
$(0000011110101100110001010101100011)$  & $\textbf{8}$ &
$\textbf{215}$   \\ \hline $\mathcal{C}_{85}$ &
$(1110100101011111001101011011011110)$  & $\textbf{8}$ &
$\textbf{216}$   & $\mathcal{C}_{86}$ &
$(0001111000001111100010100011011010)$  & $\textbf{8}$ &
$\textbf{217}$   \\ \hline $\mathcal{C}_{87}$ &
$(1110011000000010100101000101010110)$  & $\textbf{8}$ &
$\textbf{218}$   & $\mathcal{C}_{88}$ &
$(0100011111001011000000000010000011)$  & $\textbf{8}$ &
$\textbf{220}$   \\ \hline
\end{tabular}}
\end{center}
\end{table}

\begin{table}[H]\caption{Neighbours of  $\mathcal{N}_{(4)}$}\label{neighbors4}
\begin{center}\scalebox{0.8}{
\begin{tabular}{cccccccccc}
\hline $\mathcal{C}_{i}$   & $(x_{35},x_{36},...,x_{68})$ &
$\gamma$ & $\beta$  &  $\mathcal{C}_{i}$   &
$(x_{35},x_{36},...,x_{68})$ &  $\gamma$ & $\beta$ \\ \hline  \hline
$\mathcal{C}_{89}$ &  $(0110011110100110101110111111001110)$  &
$\textbf{7}$ & $\textbf{163}$   & $\mathcal{C}_{90}$ &
$(1010011101011110101011111111011110)$  & $\textbf{7}$ &
$\textbf{166}$   \\ \hline $\mathcal{C}_{91}$ &
$(1001110111100010010000100001111010)$  & $\textbf{7}$ &
$\textbf{169}$   & $\mathcal{C}_{92}$ &
$(0011101011111100101001110010011011)$  & $\textbf{7}$ &
$\textbf{170}$   \\ \hline $\mathcal{C}_{93}$ &
$(1101101101010111100010000101001101)$  & $\textbf{7}$ &
$\textbf{172}$   & $\mathcal{C}_{94}$ &
$(1001001100010110100011110011101101)$  & $\textbf{8}$ &
$\textbf{180}$   \\ \hline $\mathcal{C}_{95}$ &
$(0110000111110011101010000111110111)$  & $\textbf{8}$ &
$\textbf{182}$   & $\mathcal{C}_{96}$ &
$(1110000100101011001100000100001101)$  & $\textbf{8}$ &
$\textbf{183}$   \\ \hline $\mathcal{C}_{97}$ &
$(1110111101111011001110111111010111)$  & $\textbf{8}$ &
$\textbf{184}$   & $\mathcal{C}_{98}$ &
$(0111001000001101101001110011010010)$  & $\textbf{8}$ &
$\textbf{185}$   \\ \hline $\mathcal{C}_{99}$ &
$(0010010100101110111101101011111111)$  & $\textbf{8}$ &
$\textbf{219}$   & $\mathcal{C}_{100}$ &
$(1101000000110100011000000110101000)$  & $\textbf{8}$ &
$\textbf{188}$   \\ \hline $\mathcal{C}_{101}$ &
$(0111101011001101101011010011001011)$  & $\textbf{9}$ &
$\textbf{186}$   & $\mathcal{C}_{102}$ &
$(0001001011000000110111110000010110)$  & $\textbf{9}$ &
$\textbf{187}$   \\ \hline $\mathcal{C}_{103}$ &
$(1011011001100001001110011100101101)$  & $\textbf{9}$ &
$\textbf{188}$   & $\mathcal{C}_{104}$ &
$(1000000111000110111000100010000001)$  & $\textbf{9}$ &
$\textbf{189}$   \\ \hline $\mathcal{C}_{105}$ &
$(1111010100111110101110110000011111)$  & $\textbf{9}$ &
$\textbf{190}$   & $\mathcal{C}_{106}$ &
$(1000100000000111110001110000010010)$  & $\textbf{9}$ &
$\textbf{192}$   \\ \hline $\mathcal{C}_{107}$ &
$(0111011010010110011110110001000110)$  & $\textbf{9}$ &
$\textbf{193}$   & $\mathcal{C}_{108}$ &
$(0010011100001000000010001111011000)$  & $\textbf{9}$ &
$\textbf{194}$   \\ \hline $\mathcal{C}_{109}$ &
$(0001101100111010100010011110101000)$  & $\textbf{9}$ &
$\textbf{195}$   & $\mathcal{C}_{110}$ &
$(1000011011111111111010001110010001)$  & $\textbf{9}$ &
$\textbf{196}$   \\ \hline $\mathcal{C}_{111}$ &
$(0111010111111001111101011000101110)$  & $\textbf{9}$ &
$\textbf{198}$   & $\mathcal{C}_{112}$ &
$(0101101101001100001001110011010010)$  & $\textbf{9}$ &
$\textbf{199}$   \\ \hline $\mathcal{C}_{113}$ &
$(1111011011111111111010100100111001)$  & $\textbf{9}$ &
$\textbf{200}$   & $\mathcal{C}_{114}$ &
$(0011111100000101110110110111011111)$  & $\textbf{9}$ &
$\textbf{201}$   \\ \hline $\mathcal{C}_{115}$ &
$(0100111111101001101001110001101011)$  & $\textbf{9}$ &
$\textbf{202}$   & $\mathcal{C}_{116}$ &
$(1111101111000110001100111111101100)$  & $\textbf{9}$ &
$\textbf{203}$   \\ \hline $\mathcal{C}_{117}$ &
$(0101111110001110100001110110011011)$  & $\textbf{9}$ &
$\textbf{204}$   & $\mathcal{C}_{118}$ &
$(0111110111111110111101000001110100)$  & $\textbf{9}$ &
$\textbf{205}$   \\ \hline $\mathcal{C}_{119}$ &
$(1110110111101011000110100111111100)$  & $\textbf{9}$ &
$\textbf{206}$   & $\mathcal{C}_{120}$ &
$(0000000111010010100010010001011001)$  & $\textbf{9}$ &
$\textbf{207}$   \\ \hline $\mathcal{C}_{121}$ &
$(0001001101110101011111001000101101)$  & $\textbf{9}$ &
$\textbf{208}$   & $\mathcal{C}_{122}$ &
$(0100001111001011001010000111010011)$  & $\textbf{9}$ &
$\textbf{209}$   \\ \hline $\mathcal{C}_{123}$ &
$(0101000110111111010111000111000100)$  & $\textbf{9}$ &
$\textbf{210}$   & $\mathcal{C}_{124}$ &
$(0110110111011011011110111101001100)$  & $\textbf{9}$ &
$\textbf{211}$   \\ \hline $\mathcal{C}_{125}$ &
$(0001110001110001001001110010111010)$  & $\textbf{9}$ &
$\textbf{213}$   & $\mathcal{C}_{126}$ &
$(0001010100001110010110011101111101)$  & $\textbf{9}$ &
$\textbf{214}$   \\ \hline $\mathcal{C}_{127}$ &
$(0101010110001011110111000001101110)$  & $\textbf{9}$ &
$\textbf{215}$   & $\mathcal{C}_{128}$ &
$(0010011111011010100011110101011011)$  & $\textbf{9}$ &
$\textbf{216}$   \\ \hline $\mathcal{C}_{129}$ &
$(0111100100111001111101100111110101)$  & $\textbf{9}$ &
$\textbf{217}$   & $\mathcal{C}_{130}$ &
$(1101110110110011011001111111011011)$  & $\textbf{9}$ &
$\textbf{218}$   \\ \hline $\mathcal{C}_{131}$ &
$(1101011010110011000111101000101100)$  & $\textbf{9}$ &
$\textbf{219}$   & $\mathcal{C}_{132}$ &
$(0110101110111110101011011111101011)$  & $\textbf{9}$ &
$\textbf{220}$   \\ \hline $\mathcal{C}_{133}$ &
$(1110000011001101000110000000101110)$  & $\textbf{9}$ &
$\textbf{222}$   & $\mathcal{C}_{134}$ &
$(1001111010110000000101110100000100)$  & $\textbf{9}$ &
$\textbf{223}$   \\ \hline $\mathcal{C}_{135}$ &
$(1001000111100111010011111100111001)$  & $\textbf{9}$ &
$\textbf{224}$   & $\mathcal{C}_{136}$ &
$(1011111011110111101111011111011100)$  & $\textbf{9}$ &
$\textbf{225}$   \\ \hline $\mathcal{C}_{137}$ &
$(0011111100110101110101101110110101)$  & $\textbf{9}$ &
$\textbf{226}$   & $\mathcal{C}_{138}$ &
$(1011010011100011110000011000001011)$  & $\textbf{9}$ &
$\textbf{228}$   \\ \hline $\mathcal{C}_{139}$ &
$(0101001011001111001010011001000011)$  & $\textbf{9}$ &
$\textbf{230}$   &
 &    &  &    \\ \hline
\end{tabular}}
\end{center}
\end{table}

\section{Conclusion}
We introduced the concept of a distance between self-dual codes.
This generalizes the notion of a neighbor in self-dual codes, which
leads to a new way of constructing new self-dual codes from a known
one. Applying these ideas to an extremal binary self-dual code of
length 68 we were able to construct 143 new extremal binary
self-dual codes of length 68 with new weight enumerators, including
the first examples with $\gamma=8, 9$ in $W_{68,2}$ in the
literature. Thus, we have now completed the theoretical list of
possible $\gamma$ values that can be found in $W_{68,2}$. Generator
matrices for some of the new codes are available online at
\cite{web}. 42 of the codes we have constructed have $\gamma=8$ in
their weight enumerator, while 40 of them have $\gamma=9$ in their
weight enumerators. In particular, we have been able to construct
the codes that have the following parameters:

\begin{equation*}
\begin{split}
(\gamma =5,& \quad \beta =\{195,198,200,202,211\}), \\
(\gamma =6,& \quad \beta =\{175,177,179,181,182,183,185,186,187,188,189,190,191,193,194,195,196,197,198,199,\\
& \qquad \quad \; \,       200,201,202,204,206,207\}), \\
(\gamma =7,& \quad \beta =\{163,166,169,170,171,172,173,174,176,177,178,179,180,181,183,184,185,186,187,188,\\
& \qquad \quad \;  \,       190,191,192,193,194,195,197,198,199,212\}), \\
(\gamma =8,& \quad \beta =\{180,181,182,183,184,185,186,187,188,189,190,191,192,193,194,195,196,197,198,199,\\
& \qquad \quad \; \,        200,201,202,203,204,205,206,207,208,209,210,211,212,213,214,215,216,217,218,219,\\
& \qquad \quad \; \,        220,221\}), \\
(\gamma =9,& \quad \beta =\{186,187,188,189,190,192,193,194,195,196,198,199,200,201,202,203,204,205,206,207, \\
& \qquad \quad \; \,        208,209,210,211,213,214,215,216,217,218,219,220,221,222,223,224,225,226,228,230 \})\\
\end{split}%
\end{equation*}

The strength of this new approach has been demonstrated by the
number of new weight enumerators that we have been able to obtain by
applying it to a single code. We believe this will open up new
venues in the search and classification of new extremal binary
self-dual codes.

\end{document}